\let\oldsqrt\sqrt
\def\sqrt{\mathpalette\DHLhksqrt}
\def\DHLhksqrt#1#2{%
\setbox0=\hbox{$#1\oldsqrt{#2\,}$}\dimen0=\ht0
\advance\dimen0-0.2\ht0
\setbox2=\hbox{\vrule height\ht0 depth -\dimen0}%
{\box0\lower0.4pt\box2}}
\newcommand{\R}{\mathbb{R}} 
\newcommand{\dist}{\textnormal{dist}} 
\newcommand{\supp}{\textnormal{supp}} 
\renewcommand{\phi}{\varphi}
\newcommand{\cB}{{\mathcal B}}
\theoremstyle{definition}
\newtheorem{defi}{Definition}[section]
\theoremstyle{plain} 
\newtheorem{thm}[defi]{Theorem}
\newtheorem{lemma}[defi]{Lemma}
\theoremstyle{definition}
\numberwithin{equation}{section}
 \title[A note on P\'{o}lya-Szeg\"{o} inequality for fractional Orlicz-Sobolev seminorm in domains]{A note on P\'{o}lya-Szeg\"{o} inequality for fractional Orlicz-Sobolev seminorm in domains}  
\author[Remi Yvant Temgoua]{Remi Yvant Temgoua}
\address{The Fields Institute for research in mathematical sciences, 222 college street, 2nd floor, Toronto, Ontario, m5t 3j1 Canada, and School of Mathematics and Statistics, Carleton University, 1125 Colonel By Dr, Ottawa, Ontario, K1S 5B6, Canada, and University of Bertoua, P.O Box: 416 Bertoua, Cameroon} 
\email{rtemgoua@fieldsinstitute.ca}
\date{\today\\ 	\textit{Keywords.}  P\'{o}lya-Szeg\"{o} inequality, Symmetric radial decreasing rearrangement, Fractional Orlicz-Sobolev seminorm.\\
	\textit{~~~~~~2020 Mathematics Subject Classification:} 46E30, 45G05.}
\begin{document}

\begin{abstract}
	In this paper, we study the effect of symmetric radial decreasing rearrangement on fractional Orlicz-Sobolev seminorm in domains. Roughly speaking, we prove that symmetric radial decreasing rearrangement can increase the fractional Orlicz-Sobolev seminorm in domains. Our result extends that of Li-Wang [\textit{Commun. Contemp. Math. 21.07 (2019): 1850059.}] to the setting of fractional seminorm in domains admitting behaviors more general than powers. 
\end{abstract}

\maketitle

	
		
		

\section{Introduction and main results}\label{section:introduction}
P\'{o}lya-Szeg\"{o} inequality plays an important role in many questions from mathematical physics, spectral theory, and analysis of linear and non-linear PDE. In particular, it has application in the study of isoperimetric inequalities, in the Faber-Krahn inequality and in establishing optimal  geometric and functional inequalities such as the optimal Sobolev inequality, Hardy-Littlewood-Sobolev inequality, and Moser-Trudinger inequality, see for instance \cite{frank2008non,talenti1976best,talenti1994inequalities,lieb1983sharp}. Recall that the P\'{o}lya-Szeg\"{o} inequality \cite{polya1951isoperimetric} states that: if $u\in W^{1,p}(\R^N)$ is nonnegative, then $u^*\in W^{1,p}(\R^N)$ and
\begin{equation*}
\int_{\R^N}|\nabla u^*|^p\ dx\leq \int_{\R^N}|\nabla u|^p\ dx,
\end{equation*}
where $u^*$ is the symmetric decreasing rearrangement or Schwarz symmetrization of $u$. This inequality was extended to nonlocal setting in \cite{almgren1989symmetric}, where the authors proved that 
\begin{equation*}
\iint_{\R^N\times\R^N}\frac{|u^*(x)-u^*(y)|^p}{|x-y|^{N+sp}}\ dxdy\leq\iint_{\R^N\times\R^N}\frac{|u(x)-u(y)|^p}{|x-y|^{N+sp}}\ dxdy
\end{equation*}
for all $u\in W^{s,p}(\R^N)$ with $p>1$ and $s\in(0,1)$. Recently, P\'{o}lya-Szeg\"{o} inequality was extended to the  more general setting of fractional Orlicz-Sobolev spaces in \cite{de2021polya}. More precisely, let $G:\R\to\R$ be a Young function satisfying the growth condition
\begin{equation}\label{g}
1<p^-_{G}\leq\frac{tg(t)}{G(t)}\leq p^+_{G}<\infty~~~\forall t>0,
\end{equation}
where $g=G', p^{-}_G:=\inf_{t>0}\frac{tg(t)}{G(t)}$ and $p^+_G:=\sup_{t>0}\frac{tg(t)}{G(t)}.$ Consider two functions $M, N: \R_+\to\R_+$ satisfying
\begin{align}
& M~\text{and}~N~\text{are nondecreasing and}~M(r), N(r)>0~ \text{for}~r>0,  \label{m1}\\
& M(r)\geq\min\{1,r\}~\text{and}~N~\text{is contibuous},   \label{m2}\\
& \int_{0}^{1}\frac{r^{N-1+p^-_G}}{N(r)M(r)^{p^-_G}}\ dr<\infty~\text{and}~\int_{1}^{\infty}\frac{r^{N-1}}{N(r)M(r)^{p^-_G}}\ dr<\infty.  \label{m3}
\end{align}
Then the authors in \cite{de2021polya} proved that under the assumptions \eqref{g}, \eqref{m1}, \eqref{m2}, and \eqref{m3}, the following inequality 
\begin{equation}\label{polya-szego-orlicz-rn} 
\iint_{\R^N\times\R^N}G\Big(\frac{u^*(x)-u^*(y)}{M(|x-y|)}\Big)\frac{dxdy}{N(|x-y|)}\leq\iint_{\R^N\times\R^N}G\Big(\frac{u(x)-u(y)}{M(|x-y|)}\Big)\frac{dxdy}{N(|x-y|)} 
\end{equation}
holds for all $u\in W^{M,N,G}(\R^N)$. Here, $W^{M,N,G}(\R^N)$ consists of measurable functions $u: \R^N\to\R$ such that $\int_{\R^N}G(u)\ dx<\infty$ and $\iint_{\R^N\times\R^N}G\Big(\frac{u(x)-u(y)}{M(|x-y|)}\Big)\frac{dxdy}{N(|x-y|)}<\infty.$ Moreover, as a direct application of \eqref{polya-szego-orlicz-rn}, a Faber-Krahn type inequality for Dirichlet g-eigenvalues and Poincar\'{e}'s constants for the fractional $g$-Laplacian is also proved in \cite{de2021polya}.

Motivated by the above discussion, we are interested in analyzing whether \eqref{polya-szego-orlicz-rn} remains valid  if $\R^N$ is replaced by a domain $\Omega$ of finite measure on the right-hand side (and correspondingly by the symmetric rearrangement $\Omega^*$ of $\Omega$ on the left-hand side). That is, do we have
\begin{equation}\label{polya-szego-orlicz-domain} 
\iint_{\Omega^*\times\Omega^*}G\Big(\frac{u^*(x)-u^*(y)}{M(|x-y|)}\Big)\frac{dxdy}{N(|x-y|)}\leq\iint_{\Omega\times\Omega}G\Big(\frac{u(x)-u(y)}{M(|x-y|)}\Big)\frac{dxdy}{N(|x-y|)}?
\end{equation}
Another motivation for considering the above question comes from the study of Faber-Krahn type inequality for Dirichlet g-eigenvalues and Poincar\'{e}'s constants for the regional fractional $g$-Laplacian. To be more precise, consider the eigenvalue problem 
\begin{equation}\label{eigenvalue-problem}
\left\{
\begin{aligned}
(-\Delta_g)^s_\Omega u&=\lambda g(|u|)\frac{u}{|u|}~~~\text{in}~\Omega\\
u&=0~~~~~~~~~~~~~~~\text{on}~\partial\Omega,
\end{aligned}
\right.
\end{equation}
where $(-\Delta_g)^s_{\Omega}$ is the so-called regional fractional $g$-Laplacian defined by 
\begin{equation*}
(-\Delta_g)^s_{\Omega}u(x):=P.V.\int_{\Omega}g\Big(\frac{|u(x)-u(y)|}{|x-y|^s}\Big)\frac{u(x)-u(y)}{|u(x)-u(y)|}\frac{dy}{|x-y|^{N+s}}.
\end{equation*}
This operator is well defined and continuous between $W^{s,G}(\Omega)$ and its dual. Here, $W^{s,G}(\Omega)$  consists of measurable functions $u:\R^N\to\R$ such that $\int_{\Omega}G(|u|)\ dx<\infty$ and $\iint_{\Omega\times \Omega}G(\frac{|u(x)-u(y)|}{|x-y|^s})\frac{dxdy}{|x-y|^N}<\infty$. Next, we define 
\begin{equation*}
W^{s,G}_0(\Omega):=\{u\in W^{s,G}(\Omega): u=0~\text{a.e. on}~\partial\Omega\}.
\end{equation*}
Arguing as in \cite{salort2020eigenvalues}, it can be proved that \eqref{eigenvalue-problem} has a sequence of eigenvalues with corresponding eigenfunctions which belong to  $W^{s,G}_0(\Omega)$. Moreover, due to possible lack of homogeneity of \eqref{eigenvalue-problem}, eigenfunctions depend strongly on the energy level. Precisely, for any $\mu>0$ there exists $\lambda_{\mu}>0$ and an eigenfunction $u_{\mu}\in W^{s,G}_0(\Omega)$ normalized such as $\int_{\Omega}G(|u_{\mu}|)\ dx=\mu$, that is,
\begin{align*}
\langle (-\Delta_g)^s_{\Omega}u_{\mu}, \phi\rangle:=\iint_{\Omega\times\Omega}g(|D_su_{\mu}|)\frac{D_su_{\mu}}{|D_su_{\mu}|}D_s\phi\frac{dxdy}{|x-y|^N}=\lambda_{\mu}(\Omega)\int_{\Omega}g(|u_{\mu}|)\frac{u_{\mu}}{|u_{\mu}|}\phi\ dx,
\end{align*}
for all $\phi\in W^{s,G}_0(\Omega)$, where $D_su:=\frac{u(x)-u(y)}{|x-y|^s}$ is the $s$-H\"{o}lder quotient. Note that $\langle\cdot ,\cdot\rangle$ is the duality product. 

Hence, as in \cite{salort2020eigenvalues}, the first eigenvalue of \eqref{eigenvalue-problem} can be defined as
\begin{equation*}
\lambda_1^G(\Omega)=\inf\{\lambda_{\mu}(\Omega): \mu>0\}.
\end{equation*}
We also define
\begin{equation*}
\alpha^G_{\mu}(\Omega)=\inf\Bigg\{\iint_{\Omega\times \Omega}G\Big(\frac{|u(x)-u(y)|}{|x-y|^s}\Big)\frac{dxdy}{|x-y|^N}: u\in W^{s,G}_0(\Omega), \int_{\Omega}G(|u|)\ dx=\mu\Bigg\}  
\end{equation*}
and the best Poincar\'{e} constant over all possible values of $\mu$
\begin{equation*}
\alpha^G_1(\Omega)=\inf\{\alpha^G_{\mu}(\Omega): \mu>0\}.
\end{equation*}
The nontrivial dependence of $\lambda_1^G(\Omega)$ and $\alpha_1^G(\Omega)$ on $\Omega$ leads to interesting and natural questions of shape optimization type: \\\\  
\textbf{Open Question 1.} \textit{Let $G$ be a Young function satisfying \eqref{g}. What are all open sets $\Omega\subset\R^N$ with $\lambda_1^G(\Omega)=\underline{\lambda}^G$, where $\underline{\lambda}^G:=\inf\{\lambda_1^G(V): V\subset\R^N~\text{is an open set}\}>0$?}\\\\
\textbf{Open Question 2.} \textit{Let $G$ be a Young function satisfying \eqref{g}. What are all open sets $\Omega\subset\R^N$ with $\alpha_1^G(\Omega)=\underline{\alpha}^G$, where $\underline{\alpha}^G:=\inf\{\alpha_1^G(V): V\subset\R^N~\text{is an open set}\}>0$?}\\

To the best of the author's knowledge, it is unknown whether there are any such sets. A tempting conjecture to both questions would be that all such sets are balls, which could follow from \eqref{polya-szego-orlicz-domain}. However, we show in this paper that \eqref{polya-szego-orlicz-domain} fails to holds.  More precisely, we have 
\begin{thm}\label{main-result}
	Let $G$ be a Young function satisfying \eqref{g}. Let $N\geq1, s\in(0,1)$ and let $\Omega$ be a nonempty open set of $\R^N$ such that $|\Omega|<\infty$. There exists a nonnegative $u\in C^{\infty}_c(\Omega)$ such that 
	\begin{equation}\label{e1}
	\iint_{\Omega\times\Omega}G\Big(\frac{|u(x)-u(y)|}{|x-y|^s}\Big)\frac{dxdy}{|x-y|^N}<	\iint_{\Omega^*\times\Omega^*}G\Big(\frac{|u^*(x)-u^*(y)|}{|x-y|^s}\Big)\frac{dxdy}{|x-y|^N}. 
	\end{equation}
\end{thm}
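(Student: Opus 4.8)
Introduce, for a measurable $A\subseteq\R^N$ and a measurable function $v$, the abbreviation
\[
\cE_A(v):=\iint_{A\times A}G\Big(\frac{|v(x)-v(y)|}{|x-y|^{s}}\Big)\frac{dx\,dy}{|x-y|^{N}},
\]
so that \eqref{e1} is the assertion $\cE_\Omega(u)<\cE_{\Omega^*}(u^*)$. The plan is to exploit that $\Omega^*$ depends on $\Omega$ only through $|\Omega|$: by gluing to a small ball a \emph{disjoint} ball of enormous radius one can make $\Omega^*$ as large as desired while keeping the test function confined to the small component, so that the left-hand side stays bounded by the energy of $u$ over a fixed ball (strictly smaller than its full $\R^N$-energy), whereas the right-hand side converges to exactly that full $\R^N$-energy. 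Concretely, fix once and for all a radially nonincreasing $u\in C^\infty_c(\R^N)$ with $u\ge0$, $u\not\equiv0$ and $\{u>0\}=B_{\rho_0}$, where $B_r:=B(0,r)$; then $u$ equals its own symmetric decreasing rearrangement, $u^*=u$. For $R_1>\rho_0$ and for $\xi\in\R^N$, $R_2>0$ with $B_{R_1}\cap B(\xi,R_2)=\emptyset$, set $\Omega:=B_{R_1}\cup B(\xi,R_2)$, a nonempty open set of finite measure with $u\in C^\infty_c(B_{R_1})\subset C^\infty_c(\Omega)$; then $\Omega^*$ is the ball $B_\rho$ centered at the origin with $|B_\rho|=|B_{R_1}|+|B(\xi,R_2)|$, and since $\supp u\subset B_{R_1}\subset B_\rho=\Omega^*$ we have $u^*|_{\Omega^*}=u$, hence $\cE_{\Omega^*}(u^*)=\cE_{B_\rho}(u)$.

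Two observations drive the argument. First, $\cE_{B_\rho}(u)\uparrow\cE_{\R^N}(u)=:E$ as $R_2\to\infty$ (so that $B_\rho\uparrow\R^N$), by monotone convergence, and $E<\infty$ because $u\in C^\infty_c(\R^N)$: near the diagonal one bounds the integrand using $|u(x)-u(y)|\le\|\nabla u\|_\infty|x-y|$ and $G(\lambda t)\le\lambda^{p^-_G}G(t)$ for $0<\lambda\le1$, while for $|x-y|\ge1$ one uses that $u$ is bounded with compact support together with $G(t)\lesssim t^{p^-_G}$ as $t\to0^+$, both consequences of \eqref{g}. Second, since $u$ vanishes on $B(\xi,R_2)$ and $u=0$ outside $B_{\rho_0}$, we have
\[
\cE_\Omega(u)=\cE_{B_{R_1}}(u)+2\int_{B_{\rho_0}}\int_{B(\xi,R_2)}G\Big(\frac{u(x)}{|x-y|^{s}}\Big)\frac{dy\,dx}{|x-y|^{N}}
\]
and
\[
\cE_{B_{R_1}}(u)=E-2\int_{B_{\rho_0}}\int_{\R^N\setminus B_{R_1}}G\Big(\frac{u(x)}{|x-y|^{s}}\Big)\frac{dy\,dx}{|x-y|^{N}}.
\]
In the second identity the subtracted integral is strictly positive, since its integrand is positive on the positive-measure set $B_{\rho_0}\times(\R^N\setminus B_{R_1})$ (here one uses that \eqref{g} forces $G(t)>0$ for $t>0$) and it is finite, being dominated by $E$; call it $2c>0$, so $\cE_{B_{R_1}}(u)=E-2c$. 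In the first identity, writing $L:=\dist(B_{R_1},B(\xi,R_2))$ and noting that $B(\xi,R_2)\subset\{y:|x-y|\ge L\}$ for every $x\in B_{\rho_0}$, the cross term is at most $2|B_{\rho_0}|\int_{\{|w|\ge L\}}G(\|u\|_\infty|w|^{-s})|w|^{-N}\,dw$, a quantity depending on $L$ alone which, being the tail of an integral convergent at infinity, tends to $0$ as $L\to\infty$.

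The conclusion then follows by choosing parameters in the right order: first fix $R_2$ so large that $\cE_{B_\rho}(u)>E-c$; then, keeping this $R_2$ fixed (which keeps $\Omega^*$, hence the right-hand side, unchanged), choose $\xi$ (equivalently $L$) so large that the cross term is $<c$. Combining the two identities gives $\cE_\Omega(u)<(E-2c)+c=E-c<\cE_{B_\rho}(u)=\cE_{\Omega^*}(u^*)$, which is precisely \eqref{e1}. The only genuine difficulty I anticipate is the order-of-limits bookkeeping — in particular one must observe that the cross-term bound is uniform in $R_2$, so that $L$ may legitimately be chosen after $R_2$ — together with the routine but necessary verifications that $E<\infty$, that $c>0$, and that $\supp u$ is a genuine compact subset of the open set $\Omega$, which is exactly why the supporting component $B_{R_1}$ is taken strictly larger than $\supp u$.
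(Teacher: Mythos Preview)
There is a genuine gap: you have misread the quantifier structure. The theorem asserts that for \emph{every} nonempty open set $\Omega\subset\R^N$ of finite measure there exists a nonnegative $u\in C^\infty_c(\Omega)$ satisfying \eqref{e1}. Your argument instead \emph{constructs} a particular domain $\Omega=B_{R_1}\cup B(\xi,R_2)$ (two disjoint balls) and exhibits a $u$ for that specific $\Omega$; this proves only the much weaker statement that \emph{some} pair $(\Omega,u)$ exists, not the required $\forall\,\Omega\ \exists\,u$. The freedom you exploit --- inflating $\Omega^*$ by attaching a far-away ball of large radius --- is simply unavailable when $\Omega$ is prescribed in advance; for instance your construction says nothing about the case where $\Omega$ is a single given ball, or an annulus, or any connected set.

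The paper's proof proceeds as the theorem demands. Given an arbitrary $\Omega$, it fixes a ball $B_{R_0}(x_0)\subset\Omega$, takes $u_\varepsilon(x)=\eta(x/\varepsilon)$ for a radial decreasing bump $\eta$ supported in $B_{R_0}$, and uses the decomposition
\[
\cE_\Omega(u_\varepsilon)=\cE_{\R^N}(u_\varepsilon)-2\int_{\Omega}\int_{\R^N\setminus\Omega}G\Big(\frac{u_\varepsilon(x)}{|x-y|^s}\Big)\frac{dy\,dx}{|x-y|^N},
\]
together with the analogous one for $\Omega^*$. Since $u_\varepsilon^*=u_\varepsilon$ (up to translation), the $\R^N$-terms cancel, reducing \eqref{e1} to a strict comparison of the two outer integrals. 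When $\Omega$ is not a ball this comparison follows from a strict rearrangement inequality for the radial strictly decreasing integrand $y\mapsto |y|^{-N}G(\eta(x)/|y|^s)$ (the key lemma, taken from Li--Wang), with dominated convergence passing the strict inequality from $\varepsilon=0$ to small $\varepsilon>0$; when $\Omega$ is itself a ball, the same scheme works after centering the bump at an off-center point. None of this appears in your proposal, and it cannot be bypassed: for a prescribed $\Omega$ the only degree of freedom is the choice of $u$.
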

It follows from our result that the fractional Orlicz-Sobolev seminorm in domains fails to be decreasing upon Schwarz symmetrization. This failure of P\'{o}lya-Szeg\"{o} principle for fractional Orlicz-Sobolev seminorm in domains $I_{N,s,G,\Omega}[u]:=\iint_{\Omega\times\Omega}G(\frac{|u(x)-u(y)|}{|x-y|^s})\frac{dxdy}{|x-y|^N}$ provides a remarkable difference between the two norms $I_{N,s,G,\Omega}[u]$ and $I_{N,s,G,\R^N}[u]$ on the set $C^{\infty}_c(\Omega)$. Moreover, \eqref{e1} does not support the arguments usually used to prove a Faber-Krahn type inequality for Dirichlet g-eigenvalues and Poincar\'{e}'s constants in contrast with \eqref{polya-szego-orlicz-rn}, see \cite{de2021polya}. 

On the other hand, we have the following estimate. It  states that the converse of \eqref{e1} is true up to a constant depending on $N, s, G$ and $\Omega$. This is our second main result.
\begin{thm}\label{second-main-result}
	Let $G$ be a Young function satisfying \eqref{g}. Let $N\geq1$ and $s\in(0,1)$. Define $\beta_{s,G}:[0,\infty)\to\R$ by
	\begin{equation*}
	\beta_{s,G}(\lambda):=\sup_{t\in[0,\infty)}\frac{G(\lambda t)}{\lambda^{\frac{1}{s}}G(t)}.
	\end{equation*}
	 Let $\Omega$ be a nonempty open set of $\R^N$. Assume that one of the following holds:
	 \begin{itemize}
	 	\item [$(1)$] $\Omega$ is a bounded Lipschitz domain and $\liminf_{\lambda\to0}\beta_{s,G}(\lambda)=0$;
	 	
	 	\item [$(2)$] $\Omega=\{(x',x_N)\in\R^N: x'\in\R^{N-1}, x_N>\Phi(x')\}$, where $\Phi:\R^{N-1}\to\R$ is a Lipschitz map and $s, G$ are such that $\liminf_{\lambda\to0}\beta_{s,G}(\lambda)=0$ or $\liminf_{\lambda\to\infty}\beta_{s,G}(\lambda)=0$; 
	 	
	 	\item [$(3)$] $\R^N\setminus\Omega$ is closure of some bounded Lipschitz domain and $s, G$ are such that $\liminf_{\lambda\to\infty}\lambda^{\frac{1-N}{s}}\beta_{s,G}(\lambda)=0=\liminf_{\lambda\to0}\beta_{s,G}(\lambda)$ or $\liminf_{\lambda\to0}\lambda^{\frac{1-N}{s}}\beta_{s,G}(\lambda)=0$ or $\liminf_{\lambda\to\infty}\beta_{s,G}(\lambda)=0$.
	 \end{itemize} 
	 Then there exists a constant $C=C(N,s,G,\Omega)>0$ such that
	\begin{equation}\label{n1}
	\iint_{\R^N\times\R^N}G\Big(\frac{|u^*(x)-u^*(y)|}{|x-y|^s}\Big)\frac{dxdy}{|x-y|^N}\leq C\iint_{\Omega\times\Omega}G\Big(\frac{|u(x)-u(y)|}{|x-y|^s}\Big)\frac{dxdy}{|x-y|^N}
	\end{equation}
	for all nonnegative $u\in C^{\infty}_c(\Omega)$. In particular,
	\begin{equation}\label{n2}
	\iint_{\Omega^*\times\Omega^*}G\Big(\frac{|u^*(x)-u^*(y)|}{|x-y|^s}\Big)\frac{dxdy}{|x-y|^N}\leq C\iint_{\Omega\times\Omega}G\Big(\frac{|u(x)-u(y)|}{|x-y|^s}\Big)\frac{dxdy}{|x-y|^N}.
	\end{equation}
\end{thm}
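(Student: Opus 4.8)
The plan is to deduce \eqref{n1} from the Euclidean P\'{o}lya-Szeg\"{o} inequality \eqref{polya-szego-orlicz-rn}, applied on all of $\R^{N}$, combined with a fractional Orlicz--Hardy inequality on $\Omega$; then \eqref{n2} is immediate, since $\Omega^{*}\subseteq\R^{N}$ (and $\Omega^{*}=\R^{N}$ when $|\Omega|=\infty$). First I would invoke \eqref{polya-szego-orlicz-rn} with $M(r)=r^{s}$ and $N(r)=r^{N}$. These satisfy \eqref{m1}--\eqref{m3}: both are nondecreasing and positive on $(0,\infty)$; $r^{s}\ge\min\{1,r\}$ because $s\in(0,1)$; and the two integrals in \eqref{m3} reduce to $\int_{0}^{1}r^{p^{-}_{G}(1-s)-1}\,dr$ and $\int_{1}^{\infty}r^{-1-sp^{-}_{G}}\,dr$, which converge because $p^{-}_{G}>1$ and $s\in(0,1)$. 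A nonnegative $u\in C^{\infty}_{c}(\Omega)$ lies in $W^{M,N,G}(\R^{N})$, so \eqref{polya-szego-orlicz-rn} gives $I_{N,s,G,\R^{N}}[u^{*}]\le I_{N,s,G,\R^{N}}[u]$, and it therefore remains to prove that
\[
I_{N,s,G,\R^{N}}[u]\ \le\ C\,I_{N,s,G,\Omega}[u]\qquad\text{for all nonnegative }u\in C^{\infty}_{c}(\Omega).
\]

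Next I would peel off the interaction of $u$ with $\R^{N}\setminus\Omega$. Since $u\equiv 0$ on $\R^{N}\setminus\Omega$, the contribution of $(\R^{N}\setminus\Omega)\times(\R^{N}\setminus\Omega)$ vanishes and $|u(x)-u(y)|=|u(x)|$ whenever $x\in\Omega$, $y\notin\Omega$, so
\[
I_{N,s,G,\R^{N}}[u]=I_{N,s,G,\Omega}[u]+2\,T[u],\qquad T[u]:=\int_{\Omega}\int_{\R^{N}\setminus\Omega}G\Big(\frac{|u(x)|}{|x-y|^{s}}\Big)\frac{dy\,dx}{|x-y|^{N}},
\]
and the claim becomes $T[u]\le C\,I_{N,s,G,\Omega}[u]$. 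Writing $\delta(x):=\dist(x,\R^{N}\setminus\Omega)$ and using $|x-y|\ge\delta(x)$, polar coordinates, and the substitution $\rho=|u(x)|/r^{s}$, I get for each $x\in\Omega$
\[
\int_{\R^{N}\setminus\Omega}G\Big(\frac{|u(x)|}{|x-y|^{s}}\Big)\frac{dy}{|x-y|^{N}}\ \le\ C_{N}\int_{\delta(x)}^{\infty}G\Big(\frac{|u(x)|}{r^{s}}\Big)\frac{dr}{r}\ =\ \frac{C_{N}}{s}\int_{0}^{|u(x)|/\delta(x)^{s}}\frac{G(\rho)}{\rho}\,d\rho\ \le\ \frac{C_{N}}{s\,p^{-}_{G}}\,G\Big(\frac{|u(x)|}{\delta(x)^{s}}\Big),
\]
where the last step is $G(\rho)/\rho\le g(\rho)/p^{-}_{G}$, a restatement of the lower bound in \eqref{g}. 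Thus the theorem is reduced to the fractional Orlicz--Hardy inequality
\[
\int_{\Omega}G\Big(\frac{|u(x)|}{\delta(x)^{s}}\Big)\,dx\ \le\ C\,I_{N,s,G,\Omega}[u],\qquad u\in C^{\infty}_{c}(\Omega),\ u\ge 0 .
\]

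Proving this Hardy inequality is the core of the matter, and it is there that the three geometric assumptions and the functions $\beta_{s,G}$ intervene. For a Lipschitz epigraph (case (2)) I would flatten the boundary by a bi-Lipschitz change of variables; this distorts the fractional Orlicz seminorm and the function $\delta$ only by multiplicative constants, thanks to the two-sided bounds $\lambda^{p^{-}_{G}}\le G(\lambda t)/G(t)\le\lambda^{p^{+}_{G}}$ for $\lambda\ge 1$ implied by \eqref{g}, and reduces matters to the model half-space $\R^{N}_{+}=\{x=(x',x_{N}):x_{N}>0\}$. There I would establish $\int_{\R^{N}_{+}}G(|u|/x_{N}^{s})\,dx\le C\,I_{N,s,G,\R^{N}_{+}}[u]$ by a ground-state substitution $u=\phi\,w$ with $\phi(x)=x_{N}^{\gamma}$ for a suitable exponent $\gamma=\gamma(s,G)$, together with a pointwise convexity inequality for $G$, or, equivalently, by a Whitney-type dyadic chain of balls descending to $\partial\R^{N}_{+}$ and summation of the local estimates; in either route the inequality survives with a strictly positive constant precisely when $\beta_{s,G}$ tends to $0$ along a sequence $\lambda\to 0$ or, since the epigraph carries no finite scale, along a sequence $\lambda\to\infty$ --- the counterpart of the classical obstruction ``$sp=1$''. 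For a bounded Lipschitz domain (case (1)) one covers $\partial\Omega$ by finitely many boundary charts, applies the epigraph estimate in each chart, and patches with a partition of unity and a trivial interior estimate (harmless because $\delta$ is bounded below on $\supp u$); the bounded scale of $\Omega$ now admits only the branch $\liminf_{\lambda\to 0}\beta_{s,G}(\lambda)=0$. For an exterior domain (case (3)) I would use the inversion $x\mapsto\xi=x/|x|^{2}$, under which $|x-y|=|\xi-\eta|/(|\xi||\eta|)$, the Jacobian weight is $dx\,dy/|x-y|^{N}=d\xi\,d\eta/((|\xi||\eta|)^{N}|\xi-\eta|^{N})$, and $|u(x)-u(y)|/|x-y|^{s}=(|\xi||\eta|)^{s}\,|\tilde u(\xi)-\tilde u(\eta)|/|\xi-\eta|^{s}$ with $\tilde u(\xi):=u(\xi/|\xi|^{2})$, while $\Omega$ is carried onto a bounded Lipschitz domain (minus the image of infinity, an irrelevant point since $u$ is compactly supported in $\Omega$); applying the defining inequality $G(\lambda t)\le\beta_{s,G}(\lambda)\,\lambda^{1/s}G(t)$ with $\lambda=(|\xi||\eta|)^{s}$, the factor $\lambda^{1/s}=|\xi||\eta|$ combines with the weight $(|\xi||\eta|)^{-N}$ into $(|\xi||\eta|)^{1-N}\beta_{s,G}((|\xi||\eta|)^{s})$, and controlling this quantity is exactly what the hypotheses involving $\lambda^{(1-N)/s}\beta_{s,G}(\lambda)$ (together with the boundary conditions at $0$ or $\infty$) provide, after which one concludes by invoking case (1) for $\tilde u$.

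I expect the main obstacle to be this fractional Orlicz--Hardy inequality, and in particular pinning down the sharp $\beta_{s,G}$-hypothesis in each geometry: one has to see which limiting behaviour of $\beta_{s,G}$ keeps the Hardy constant strictly positive --- the behaviour near $0$ (always usable), the behaviour near $\infty$ (usable only when $\Omega$ carries no finite scale), and, in the exterior case, the independent condition at infinity governed by $\lambda^{(1-N)/s}\beta_{s,G}(\lambda)$ that the inversion forces. A secondary and largely routine point is to control the distortion of the fractional Orlicz seminorm under the bi-Lipschitz flattening and under the inversion, which rests on the $\Delta_{2}$-type consequences of \eqref{g}.
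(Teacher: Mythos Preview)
Your overall strategy is exactly the paper's: apply \eqref{polya-szego-orlicz-rn} with $M(r)=r^{s}$, $N(r)=r^{N}$, split $I_{N,s,G,\R^{N}}[u]=I_{N,s,G,\Omega}[u]+2T[u]$, bound $T[u]$ by $\int_{\Omega}G(|u|/\delta^{s})\,dx$, and finish with a fractional Orlicz--Hardy inequality on $\Omega$. The only differences are minor. For the pointwise bound on the inner integral of $T[u]$ the paper writes $G\bigl(\frac{|u(x)|}{|x-y|^{s}}\bigr)=G\bigl(\frac{|u(x)|}{\delta(x)^{s}}\cdot\frac{\delta(x)^{s}}{|x-y|^{s}}\bigr)$, applies Lemma~\ref{two-side-g} with $a=\delta(x)^{s}/|x-y|^{s}\le 1$, and then uses the classical estimate $\int_{\R^{N}\setminus\Omega}|x-y|^{-N-sp^{-}_{G}}\,dy\asymp\delta(x)^{-sp^{-}_{G}}$; your polar-coordinates computation reaches the same conclusion and is a clean alternative.

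The substantive point is that the paper does \emph{not} prove the fractional Orlicz--Hardy inequality at all: it quotes it as a black box from Bal--Mohanta--Roy--Sk \cite[Theorems~1.5 and~1.2]{bal2022hardy}, and the three geometric alternatives in the statement, together with the conditions on $\beta_{s,G}$, are precisely the hypotheses under which that reference supplies the inequality $\int_{\Omega}G(|u|/\delta^{s})\,dx\le C\,I_{N,s,G,\Omega}[u]$. So your lengthy programme of flattening, ground-state substitution, Whitney chains, and inversion is not needed here; the ``main obstacle'' you identify is outsourced, and the proof in the paper is correspondingly short.
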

Observe that when $G(t)=t^p$ with $p\geq2$, our results coincide with those of \cite{li2019symmetric}. Thus, the results of this paper extend those of \cite{li2019symmetric} to the more general setting of Orlicz-Sobolev spaces, allowing growth laws different than powers  such as $G(t)=t^p(1+|\log t|)$, with $p\geq 2$, or models related to double phase problems where $G(t)=t^q+t^p$ with $p, q\geq2$. 

Our proof of Theorem \ref{main-result} is inspired by the approach of Li-Wang \cite{li2019symmetric}, which is based on an explicit computation of the seminorms. To prove Theorem \ref{second-main-result}, we use the P\'{o}lya-Szeg\"{o} inequality \eqref{polya-szego-orlicz-rn} (with $M(t)=t^s$ and $N(t)=t^N$) together with the fractional Orlicz-Hardy inequality of Bal-Mohanta-Roy-SK \cite{bal2022hardy}.

The paper is organized as follows. In Section \ref{section:preliminary} we collect some definitions and properties of symmetric rearrangements and Young functions. We end this section with some notations. In Section \ref{section:proof-of-main-result} we prove the main results of this paper. In the last section, we present some extensions and applications of our main results.

\section{Preliminary}\label{section:preliminary}
In this section, we recall the notion of symmetric rearrangement of a function and set, as well as we introduce Young functions and present some of their basics properties.

\subsection{Symmetric rearrangement}
Let $A$ be a Borel set in $\R^N$ with $|A|<\infty$ (being $|A|$ the Lebesgue measure of $A$). The symmetric rearrangement $A^*$ of $A$ is defined as
\begin{equation*}
A^*=\Big\{x: |x|<\Big(\frac{|A|}{\alpha_N}\Big)^{\frac{1}{N}}\Big\},
\end{equation*}
where $\alpha_N=\frac{\pi^{\frac{N}{2}}}{\Gamma(\frac{N}{2}+1)}$ is the volume of the unit ball in $\R^N$. Note that if $|A|=0$ then $A^*=\emptyset$.

Denote by $\cB_0$ the space of Borel measurable functions $u:\R^N\to\R_+$ with the property that
\begin{equation*}
|\{x: u(x)>t\}|<\infty~~\text{for all}~t>0.
\end{equation*}
The symmetric decreasing rearrangement or Schwarz symmetrization $u^*$ of a function $u\in\cB_0$ is defined as 
\begin{equation*}
u^*(x)=\int_{0}^{\infty}\chi_{\{|u|>t\}^*}(x)\ dt=\sup\{t: |\{|u|>t\}|>\alpha_N|x|^N\},
\end{equation*}
where $\chi_A$ is the characteristic function of any set $A$. Notice that $u^*$ is radial and radially decreasing function whose sublevel sets have the same measure as those of $u$, namely,
\begin{equation*}
|\{x: u^*(x)>t\}|=|\{x: u(x)>t\}|~~\text{for all}~t>0.
\end{equation*}
Moreover, it follows from the level set characterisation that $u^*$ is inavariant under the action of the translation operator
\begin{equation*}
\tau_zu(x)=u(x-z).
\end{equation*}
In other words, it holds that $(\tau_zu)^*=u^*$. This property will be used throughout the paper and will not be mentioned explicitly.

\subsection{Young functions}
An application $G:\R_+\to\R_+$ is called Young function if it has the integral representation
\begin{equation*}
G(t)=\int_{0}^{t}g(\tau)\ d\tau
\end{equation*}
where the right-continuous function $g:\R_+\to\R_+$ satisfies the following properties
\begin{align*}
	&g(0)=0,~ g(t)>0~\text{for}~t>0\\
	&g~\text{is non-decreasing on}~(0,\infty)\\
	&\lim\limits_{t\to\infty}g(t)=\infty.
\end{align*}
Throughout the paper, the Young function $G$ is assumed to satisfy the growing condition
\begin{equation}\label{g1}
1<p^-_{G}\leq\frac{tg(t)}{G(t)}\leq p^+_{G}<\infty~~~\forall t>0.
\end{equation}
Roughly speaking, condition \eqref{g1} tells that $G$ remains between two power functions. Notice that in \cite[Theorem 4.1]{kranosel1961convex} (see also \cite[Theorem 3.4.4]{kufner1979function}), it is shown that the upper bound in \eqref{g1} is equivalent to the so-called \textit{$\Delta_2$ condition (or doubling condition)}, namely, 
\begin{equation*} 
\quad\quad\quad\quad\quad\qquad g(2t)\leq 2^{q^{+}-1}g(t),\quad\quad\quad G(2t)\leq 2^{q^+}G(t)~~~~~~t\geq0.\qquad\qquad\qquad\qquad\qquad~~~(\Delta_2)     
\end{equation*}
As a consequence of \eqref{g1}, we have the following (see e.g.  \cite[Lemma 2.1]{bahrouni2021neumann}). 
\begin{lemma}\label{two-side-g}
	Let $G$ be a Young function satisfying \eqref{g1}. Then for all $b, a\geq0$,
	\begin{equation*}
	\min\{a^{p^-_G}, a^{p^+_G}\}G(b)\leq G(ab)\leq \max\{a^{p^-_G}, a^{p^+_G}\} G(b). 
	\end{equation*}
\end{lemma}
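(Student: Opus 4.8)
The plan is to derive the two-sided bound from the observation that the growth condition \eqref{g1} is exactly equivalent to a monotonicity property of the two power-normalized ratios $G(t)/t^{p^-_G}$ and $G(t)/t^{p^+_G}$.

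First I would dispose of the degenerate cases: if $b=0$ then $G(ab)=G(0)=0$ and both outer expressions vanish as well (recall $p^-_G,p^+_G>0$, so $0^{p^{\pm}_G}=0$), and similarly if $a=0$; the case $a=1$ is trivial. So from now on assume $a,b>0$, $a\neq 1$. I would also record the elementary regularity facts used throughout: since $g$ is right-continuous, non-decreasing and hence locally bounded, the Young function $G(t)=\int_0^t g(\tau)\,d\tau$ is locally Lipschitz on $[0,\infty)$, in particular locally absolutely continuous with $G'=g$ a.e.; moreover $G(t)>0$ for $t>0$ because $g>0$ on $(0,\infty)$.

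Next I would prove the key claim: the function $t\mapsto G(t)\,t^{-p^-_G}$ is non-decreasing on $(0,\infty)$ and $t\mapsto G(t)\,t^{-p^+_G}$ is non-increasing on $(0,\infty)$. Fix $p\in\{p^-_G,p^+_G\}$. On any compact subinterval of $(0,\infty)$, $G$ is Lipschitz and bounded away from $0$, so $t\mapsto \log\!\big(G(t)\,t^{-p}\big)=\log G(t)-p\log t$ is locally absolutely continuous, with a.e. derivative
\begin{equation*}
\frac{d}{dt}\big(\log G(t)-p\log t\big)=\frac{g(t)}{G(t)}-\frac{p}{t}=\frac{1}{t}\Big(\frac{t\,g(t)}{G(t)}-p\Big).
\end{equation*}
By \eqref{g1}, this quantity is $\geq 0$ a.e. when $p=p^-_G$ and $\leq 0$ a.e. when $p=p^+_G$; since the function in question is locally absolutely continuous, the sign of its a.e. derivative forces it to be non-decreasing, resp. non-increasing, on $(0,\infty)$, and exponentiating yields the claim.

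Finally I would apply the two monotonicities to the pair of points $ab$ and $b$. If $a>1$, then $ab>b$, so monotonicity of $t\mapsto G(t)t^{-p^-_G}$ gives $G(ab)(ab)^{-p^-_G}\geq G(b)b^{-p^-_G}$, i.e. $G(ab)\geq a^{p^-_G}G(b)$, while monotonicity of $t\mapsto G(t)t^{-p^+_G}$ gives $G(ab)\leq a^{p^+_G}G(b)$; since $a>1$ and $p^-_G\leq p^+_G$, here $a^{p^-_G}=\min\{a^{p^-_G},a^{p^+_G}\}$ and $a^{p^+_G}=\max\{a^{p^-_G},a^{p^+_G}\}$, so the asserted inequality holds. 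If $0<a<1$, then $ab<b$ and both chains reverse: $G(ab)\leq a^{p^-_G}G(b)$ and $G(ab)\geq a^{p^+_G}G(b)$; now $a<1$ gives $a^{p^+_G}\leq a^{p^-_G}$, so $a^{p^+_G}=\min\{a^{p^-_G},a^{p^+_G}\}$ and $a^{p^-_G}=\max\{a^{p^-_G},a^{p^+_G}\}$, and again the claim follows. The only point requiring a little care is that $G$ is merely locally absolutely continuous rather than $C^1$, so the monotonicity of the normalized ratios must be read off from the sign of an a.e. derivative of an absolutely continuous function rather than from a pointwise computation; everything else is a direct case analysis and presents no real obstacle.
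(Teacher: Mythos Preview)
Your proof is correct. The paper does not actually supply its own proof of this lemma; it is quoted with the attribution ``see e.g.\ \cite[Lemma~2.1]{bahrouni2021neumann}'' and left without argument. What you wrote is precisely the standard derivation one finds in that literature: the growth bounds in \eqref{g1} are equivalent to monotonicity of $t\mapsto G(t)t^{-p^-_G}$ and $t\mapsto G(t)t^{-p^+_G}$, and comparing these ratios at $ab$ and $b$ yields the two-sided estimate after a case split on $a\lessgtr 1$. Your care about $G$ being only locally absolutely continuous (so that the sign of the a.e.\ logarithmic derivative still forces monotonicity) is appropriate, and the handling of the degenerate cases $a=0$, $b=0$, $a=1$ is fine.
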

Given a Young function $G$, its complementary $\tilde{G}$ is defined as
\begin{equation*}
\tilde{G}(t)=\sup\{tw-G(w): w>0\}. 
\end{equation*}
By definition of $\tilde{G}$,
\begin{equation*}
ab\leq G(a)+\tilde{G}(b),~~\text{for all}~~ a, b\geq0.
\end{equation*}
This is called Young's inequality. Moreover, the following identity holds (see \cite[Lemma 2.9]{bonder2019fractional})
\begin{equation*}
\tilde{G}(g(t))=tg(t)-G(t),~~~\forall t>0.
\end{equation*}
Combining this identity with \eqref{g1}, we get
\begin{equation*}
(p^+_G)'\leq\frac{t\tilde{g}(t)}{\tilde{G}(t)}\leq (p^-_G)'~~~\forall t>0, 
\end{equation*}
from where it follows that $\tilde{G}$ also satisfies the $\Delta_2$ condition. Here, $\tilde{g}(t)=(\tilde{G})'(t)$ and $(p^+_G)'=\frac{P^+_G}{P^+_G-1}$ (resp. $(p^-_G)'=\frac{P^-_G}{P^{-}_G-1}$) is the conjugate of $p^+_G$ (resp. $p^-_G$). 

\subsection{Notations} We denote by $A\Delta B=(A\setminus B)\cup(B\setminus A)$ the symmetric difference of any  two sets $A$ and $B$. We will denote by $B_r(x)$ the ball centered at $x$ with radius $r$ in $\R^N$.

\section{Proof of Theorems \ref{main-result} and \ref{second-main-result}}\label{section:proof-of-main-result}

In this section, we prove Theorems \ref{main-result} and \ref{second-main-result}. The following lemma is crucial to the proof of our first main result. 
\begin{lemma}\cite[Lemma 2.1]{li2019symmetric}
	Let  $\Omega$ be an open and bounded set in $\R^N$.
	\begin{itemize}
		\item [$(i)$] Suppose $f\in L^1_{loc}(\R^N)$ is radial and strictly decreasing. Then
		\begin{equation}\label{key-lema1}
		\int_{\Omega^*}f(x)\ dx>\int_{\Omega}f(x)\ dx~~~\text{if}~~|\Omega^*\Delta\Omega|>0.
		\end{equation}
		
		\item [$(ii)$] Suppose $\overline{B}_{\delta}\subset\Omega$ for some $\delta>0$, and let $f\in L^1(\R^N\setminus \overline{B}_{\delta})$ be radial and strictly decreasing.  Then
		\begin{equation}\label{key-lema2}
		\int_{\R^N\setminus\Omega}f(x)\ dx>\int_{\R^N\setminus\Omega^*} f(x)\ dx~~~\text{if}~~|\Omega^*\Delta\Omega|>0.
		\end{equation}
	\end{itemize}
\end{lemma}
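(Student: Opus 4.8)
The plan is to reduce both parts to a single comparison of integrals over the symmetric difference $\Omega^*\Delta\Omega$, exploiting the mass-preserving property $|\Omega^*|=|\Omega|$ of Schwarz symmetrization. Write $\Omega^*=B_R$ for the centred open ball with $|B_R|=|\Omega|$; since $\Omega$ is a nonempty bounded open set, $0<R<\infty$ (if $|\Omega|=0$ there is nothing to prove). As $f$ is radial, write $f(x)=h(|x|)$ with $h\colon(0,\infty)\to\R$ strictly decreasing. I would first note that the integrability hypothesis forces $h(R)$ to be a finite real number: if $h$ were $+\infty$ on some interval $(0,r_0)$ (resp.\ $(\delta,r_0)$ in part $(ii)$) or $-\infty$ on some $(r_0,\infty)$, then $f$ would fail to lie in $L^1_{\mathrm{loc}}(\R^N)$ (resp.\ in $L^1(\R^N\setminus\overline{B}_\delta)$) on a set of positive measure.

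Next I would record the elementary set-theoretic identities. From $|\Omega^*|=|\Omega|$ and additivity of Lebesgue measure, $|\Omega^*\setminus\Omega|=|\Omega\setminus\Omega^*|$, so the hypothesis $|\Omega^*\Delta\Omega|>0$ is equivalent to $|\Omega^*\setminus\Omega|=|\Omega\setminus\Omega^*|>0$. Splitting each domain of integration into its intersection with the other and the complementary piece, one gets
\begin{equation*}
\int_{\Omega^*}f\,dx-\int_{\Omega}f\,dx=\int_{\Omega^*\setminus\Omega}f\,dx-\int_{\Omega\setminus\Omega^*}f\,dx=\int_{\R^N\setminus\Omega}f\,dx-\int_{\R^N\setminus\Omega^*}f\,dx,
\end{equation*}
the last equality because $\R^N\setminus\Omega$ and $\R^N\setminus\Omega^*$ share the common tail $\R^N\setminus(\Omega\cup\Omega^*)$ and differ exactly by $\Omega^*\setminus\Omega$ and $\Omega\setminus\Omega^*$. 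In part $(ii)$ one must first check that all these integrals are finite: since $\overline{B}_\delta\subset\Omega$ with $\Omega$ open we have $|\Omega|>|B_\delta|$, hence $R>\delta$ and $\overline{B}_\delta\subset B_R=\Omega^*$; thus both $\R^N\setminus\Omega$ and $\R^N\setminus\Omega^*$ lie in $\R^N\setminus\overline{B}_\delta$ where $f\in L^1$, legitimising the splitting.

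The core of the proof is the pointwise comparison with the threshold value $h(R)$. If $x\in\Omega^*\setminus\Omega$ then $x\in B_R$, so $|x|<R$ and, by strict monotonicity, $f(x)=h(|x|)>h(R)$. If $x\in\Omega\setminus\Omega^*$ then $x\notin B_R$, so $|x|\geq R$ and $f(x)=h(|x|)\leq h(R)$, with strict inequality unless $|x|=R$; since $\{|x|=R\}\subset\partial B_R$ is Lebesgue-null, $f<h(R)$ a.e.\ on $\Omega\setminus\Omega^*$. Combining these with $|\Omega^*\setminus\Omega|=|\Omega\setminus\Omega^*|>0$ and the finiteness of $h(R)$,
\begin{equation*}
\int_{\Omega^*\setminus\Omega}f\,dx>h(R)\,|\Omega^*\setminus\Omega|=h(R)\,|\Omega\setminus\Omega^*|\geq\int_{\Omega\setminus\Omega^*}f\,dx,
\end{equation*}
so the common quantity in the previous display is strictly positive, which yields \eqref{key-lema1} and \eqref{key-lema2} simultaneously.

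The main obstacle, such as it is, is purely measure-theoretic bookkeeping: ensuring that $h(R)\in\R$ (so that multiplying by the measures is meaningful and strictness survives the comparison), that $\partial B_R$ is null (so the non-strict bound on $\Omega\setminus\Omega^*$ upgrades to a.e.\ strict), and—only in part $(ii)$—that $\overline{B}_\delta\subset\Omega^*$ so that the two complementary integrals are finite and the algebraic rearrangement is valid. Once these are in place, the statement follows directly from the equimeasurability $|\Omega^*|=|\Omega|$ together with the strict radial monotonicity of $f$.
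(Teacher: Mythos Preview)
Your argument is correct. The paper does not actually prove this lemma: it is quoted verbatim from \cite{li2019symmetric} and used as a black box, so there is no ``paper's own proof'' to compare against. That said, the approach you outline---splitting over $\Omega^*\setminus\Omega$ and $\Omega\setminus\Omega^*$, using $|\Omega^*\setminus\Omega|=|\Omega\setminus\Omega^*|$, and comparing $f$ pointwise with the threshold value $h(R)$ determined by the radius of $\Omega^*$---is the natural (and essentially the only) route, and it is the one taken in the cited source. Your bookkeeping is in order: the finiteness of $h(R)$, the nullity of $\{|x|=R\}$, and, in part~(ii), the inclusion $\overline{B}_\delta\subset\Omega^*$ ensuring that $\Omega^*\setminus\Omega$ and $\Omega\setminus\Omega^*$ both sit inside $\R^N\setminus\overline{B}_\delta$, are exactly the points that need checking, and you handle them correctly.
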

We now give the proof of Theorem \ref{main-result}.

\begin{proof}[Proof of Theorem \ref{main-result}]
	The proof is inspired by that of Theorem 1.1 in \cite{li2019symmetric}.  We start with the case when $\Omega$ is not a ball. Note that $|\Omega^*|=|\Omega|$ and $|\Omega\setminus\Omega^*|=|\Omega^*\setminus\Omega|=\frac{1}{2}|\Omega^*\Delta\Omega|>0$ since $\Omega$ is not a ball. 
	
	Since $\Omega$ is an open set, we may fix $R_0>0$ such that  $B_{R_0}:=B_{R_0}(x_0)\subset\Omega$ for some $x_0\in\Omega$. Now, let  $\eta\in C^{\infty}_c(\Omega)$ with $\supp\eta\subset B_{R_0}$ be a nonnegative radial decreasing function such that $\eta=1$ for $|x-x_0|\leq\frac{R_0}{2}$. For all $\varepsilon\in(0,1)$, we define 
	\begin{equation*}
	u_{\varepsilon}(x)=\eta\Big(\frac{x}{\varepsilon}\Big).
	\end{equation*}
	Then, from the properties of $\eta$, it is easy to see that
	\begin{equation*}
	u^*_{\varepsilon}(x)=\eta\Big(\frac{x}{\varepsilon}\Big).
	\end{equation*}
	Now,
	\begin{align}\label{e8}
	\nonumber&\iint_{\Omega\times \Omega}G\Big(\frac{|u_{\varepsilon}(x)-u_{\varepsilon}(y)|}{|x-y|^s}\Big)\frac{dxdy}{|x-y|^N}\\
	\nonumber&=\iint_{\R^N\times\R^N}G\Big(\frac{|u_{\varepsilon}(x)-u_{\varepsilon}(y)|}{|x-y|^s}\Big)\frac{dxdy}{|x-y|^N}-2\int_{\Omega}\int_{\R^N\setminus \Omega}G\Big(\frac{|u_{\varepsilon}(x)|}{|x-y|^s}\Big)\frac{dydx}{|x-y|^N}\\
	&=\iint_{\R^N\times\R^N}G\Big(\frac{|u^*_{\varepsilon}(x)-u^*_{\varepsilon}(y)|}{|x-y|^s}\Big)\frac{dxdy}{|x-y|^N}-2\int_{\Omega}\int_{\R^N\setminus \Omega}G\Big(\frac{|u_{\varepsilon}(x)|}{|x-y|^s}\Big)\frac{dydx}{|x-y|^N} 
	\end{align}
	So, the proof is completed if we show that for $\varepsilon$ sufficiently small,
	\begin{align}\label{b0}
	\int_{\Omega}\int_{\R^N\setminus \Omega}G\Big(\frac{|u_{\varepsilon}(x)|}{|x-y|^s}\Big)\frac{dydx}{|x-y|^N}>\int_{\Omega^*}\int_{\R^N\setminus \Omega^*}G\Big(\frac{|u_{\varepsilon}^*(x)|}{|x-y|^s}\Big)\frac{dydx}{|x-y|^N}
	\end{align}
	that is
	\begin{equation*}
	\int_{B_{R_0}}\int_{\R^N\setminus \Omega}G\Big(\frac{|\eta(x)|}{|\varepsilon x-y|^s}\Big)\frac{dydx}{|\varepsilon x-y|^N}>	\int_{B_{R_0}}\int_{\R^N\setminus \Omega^*}G\Big(\frac{|\eta(x)|}{|\varepsilon x-y|^s}\Big)\frac{dydx}{|\varepsilon x-y|^N},
	\end{equation*}
	that is
	\begin{equation}\label{b11}
	\int_{B_{R_0}}H_{\Omega}(\varepsilon x)\ dx>\int_{B_{R_0}}H_{\Omega^*}(\varepsilon x)\ dx, 
	\end{equation}
	where
	\begin{align*}
	H_{\Omega}(\varepsilon x)=\int_{\R^N\setminus \Omega}G\Big(\frac{|\eta(x)|}{|\varepsilon x-y|^s}\Big)\frac{dy}{|\varepsilon x-y|^N}~~\text{and}~~H_{\Omega^*}(\varepsilon x)=\int_{\R^N\setminus \Omega^*}G\Big(\frac{|\eta(x)|}{|\varepsilon x-y|^s}\Big)\frac{dy}{|\varepsilon x-y|^N}.
	\end{align*}
	Note that for $x\in B_{R_0}$, the function $y\mapsto h(x,y)=|y|^{-N}G\Big(\frac{|\eta(x)|}{|y|^s}\Big)$ is radial and strictly decreasing. Moreover, $h(x,\cdot)\in L^1(\R^N\setminus\Omega)$. Thus, from \eqref{key-lema2},
	\begin{equation*}
	H_{\Omega}(0)>H_{\Omega^*}(0).
	\end{equation*}
	So for $\varepsilon$ sufficiently small, \eqref{b11} holds true by dominated convergence theorem and thus \eqref{b0} follows. 
	
	Using this in \eqref{e8}, we get 
		\begin{align*}
	\nonumber&\iint_{\Omega\times \Omega}G\Big(\frac{|u_{\varepsilon}(x)-u_{\varepsilon}(y)|}{|x-y|^s}\Big)\frac{dxdy}{|x-y|^N}\\
	&<\iint_{\R^N\times\R^N}G\Big(\frac{|u^*_{\varepsilon}(x)-u^*_{\varepsilon}(y)|}{|x-y|^s}\Big)\frac{dxdy}{|x-y|^N}-2\int_{\Omega^*}\int_{\R^N\setminus \Omega^*}G\Big(\frac{|u_{\varepsilon}^*(x)|}{|x-y|^s}\Big)\frac{dxdy}{|x-y|^N} \\	
	&=\iint_{\Omega^*\times \Omega^*}G\Big(\frac{|u_{\varepsilon}^*(x)-u_{\varepsilon}^*(y)|}{|x-y|^s}\Big)\frac{dydx}{|x-y|^N},
	\end{align*}
	as wanted. 
	
	We now focus in the case when $\Omega$ is a ball, say $\Omega=B_1$. As above, we pick a nonnegative radial decreasing function $\eta\in C^{\infty}_c(B_1)$ and consider $\tilde{x}$ with $|\tilde{x}|=\frac{1}{2}$. Next, we define 
	\begin{equation*}
	u_{\varepsilon}(x)=\eta\Big(\frac{x-\tilde{x}}{\varepsilon}\Big),~~~x\in B_1.
	\end{equation*}
	Then from the definition of $\eta$, we have
	\begin{equation*}
	u_{\varepsilon}^*(x)=\eta\Big(\frac{x}{\varepsilon}\Big). 
	\end{equation*}
	To complete the proof, we only need to check \eqref{b0}. Note that, since $\Omega$ is a ball, then $\Omega^*=\Omega$. Therefore $H_{\Omega}=H_{\Omega^*}$. So, it is sufficient to verify that
	\begin{equation}\label{w}
	\int_{B_1}H_{B_1}(\tilde{x}+\varepsilon x)\ dx>\int_{B_1}H_{B_1}(\varepsilon x)\ dx.
	\end{equation}
	Now, 
	\begin{align*}
	H_{B_1}(\tilde{x})=\int_{\R^N\setminus B_1}G\Big(\frac{|\eta(x)|}{|\tilde{x}-y|^s}\Big)\frac{dy}{|\tilde{x}-y|^N}&=\int_{\R^N\setminus B_1(\tilde{x})}G\Big(\frac{|\eta(x)|}{|z|^s}\Big)\frac{dz}{|z|^N}\\
	&>\int_{\R^N\setminus B_1}G\Big(\frac{|\eta(x)|}{|z|^s}\Big)\frac{dz}{|z|^N}=H_{B_1}(0).
	\end{align*}
	Note that in the above inequality, we have used \eqref{key-lema2} and the fact that $(B_1(\tilde{x}))^*=B_1$. Therefore, for $\varepsilon$ sufficiently small, \eqref{w} holds, thanks to dominated convergence theorem. This completes the proof.
\end{proof}

\begin{proof}[Proof of Theorem \ref{second-main-result}]
	Let $u\in C^{\infty}_c(\Omega)$ be a nonnegative function. Then
	\begin{align}\label{z1}
\nonumber	\iint_{\R^N\times\R^N}G\Big(&\frac{|u^*(x)-u^*(y)|}{|x-y|^s}\Big)\frac{dxdy}{|x-y|^N}\leq \iint_{\R^N\times\R^N}G\Big(\frac{|u(x)-u(y)|}{|x-y|^s}\Big)\frac{dxdy}{|x-y|^N}\\
	&=\iint_{\Omega\times\Omega}G\Big(\frac{|u(x)-u(y)|}{|x-y|^s}\Big)\frac{dxdy}{|x-y|^N}+2\int_{\Omega}\int_{\R^N\setminus\Omega}G\Big(\frac{|u(x)|}{|x-y|^s}\Big)\frac{dydx}{|x-y|^N}.
	\end{align}
	Notice that in the first inequality, we have used the P\'{o}lya-Szeg\"{o} inequality \eqref{polya-szego-orlicz-rn} with $M(t)=t^s$ and $N(t)=t^N$. 
	
	We now focus on the second term on the right-hand side of \eqref{z1}. In the sequel, we denote by $\delta(x)=\dist(x,\partial\Omega)$ the distance to the boundary $\partial\Omega$ of $\Omega$. Then, using Lemma \ref{two-side-g}, we get 
	\begin{align}\label{z2}
\nonumber	\int_{\Omega}\int_{\R^N\setminus\Omega}G\Big(\frac{|u(x)|}{|x-y|^s}\Big)\frac{dydx}{|x-y|^N}&=\int_{\Omega}\int_{\R^N\setminus\Omega}G\Big(\frac{|u(x)|}{\delta(x)^s}\frac{\delta(x)^s}{|x-y|^s}\Big)\frac{dydx}{|x-y|^N}\\
	&\leq\int_{\Omega}G\Big(\frac{|u(x)|}{\delta(x)^s}\Big)\delta(x)^{sp^-_G}\ dx\int_{\R^N\setminus\Omega}\frac{dy}{|x-y|^{N+sp^-_G}}.
	\end{align}
 Using that\footnote{The notation $f\asymp g$ means that the two-sided estimate $c_1g\leq f\leq c_2g$ is true, where $c_1, c_2$ are two positive constants.} (see Eq. (1,3,2,12) in \cite{grisvard2011elliptic})
	\begin{equation*}
	\int_{\R^N\setminus\Omega}\frac{dy}{|x-y|^{N+sp^-_G}}\asymp\delta(x)^{-sp^-_G},~~~~\forall x\in\Omega,
	\end{equation*}
	it follows from \eqref{z2} that
	\begin{align}\label{z3}
	\nonumber	\int_{\Omega}\int_{\R^N\setminus\Omega}G\Big(\frac{|u(x)|}{|x-y|^s}\Big)\frac{dydx}{|x-y|^N}&\leq C(N,s,G)\int_{\Omega}G\Big(\frac{|u(x)|}{\delta(x)^s}\Big)\ dx\\
		&\leq C(N,s,G,\Omega)\iint_{\Omega\times\Omega}G\Big(\frac{|u(x)-u(y)|}{|x-y|^s}\Big)\frac{dxdy}{|x-y|^N}, 
	\end{align}
	where in the latter, we have used the fractional Orlicz-Hardy inequality, see \cite[Theorems 1.5 \& 1.2]{bal2022hardy}. Combining \eqref{z3} and \eqref{z1} we obtain 
	\begin{equation}\label{z4}
	\iint_{\R^N\times\R^N}G\Big(\frac{|u^*(x)-u^*(y)|}{|x-y|^s}\Big)\frac{dxdy}{|x-y|^N}\leq C(N,s,G,\Omega)\iint_{\Omega\times\Omega}G\Big(\frac{|u(x)-u(y)|}{|x-y|^s}\Big)\frac{dxdy}{|x-y|^N}.
	\end{equation}
	This yields \eqref{n1}. Finally, the estimate \eqref{n2} follows from \eqref{z4} and the fact that 
	\begin{equation*}
	\iint_{\Omega^*\times\Omega^*}G\Big(\frac{|u^*(x)-u^*(y)|}{|x-y|^s}\Big)\frac{dxdy}{|x-y|^N}\leq \iint_{\R^N\times\R^N}G\Big(\frac{|u^*(x)-u^*(y)|}{|x-y|^s}\Big)\frac{dxdy}{|x-y|^N}. 
	\end{equation*}
\end{proof}

\section{Extensions and applications}\label{section:application}
In this section, we present some examples of Young functions that satisfy the assumptions of our main results. We start with the power function $G(t)=t^p$, with $p\geq 2$, in which case Theorems \ref{main-result} and \ref{second-main-result} coincide with \cite[Theorems 1.1 and 1.2]{li2019symmetric}. Thus, our results can be considered as an extension or generalization of those in \cite{li2019symmetric} to the nonhomogeneous situation. 

Our results also apply to the following prototype of Young functions with a more general growth
\begin{equation*}
G(t)=t^p(1+|\log t|)~~~\text{and}~~~G(t)=\frac{t^p}{\log(e+t)},~~p\geq2.
\end{equation*}
Furthermore, as mentioned in the introduction, Young functions of the form
\begin{equation*}
G(t)=t^q+t^p~~\text{with}~~ p>q\geq 2,
\end{equation*}
also fall into the class studied in this paper. This structure is closely related to the so-called \textit{double phase variational problems}, see e.g. \cite{colombo2015regularity} and the references therein.

\section*{Data availability statement}
Data sharing not applicable to this article as no datasets were generated or analyzed during the current study.

\section*{Declaration of competing interest}
The author has no competing interests to declare that are relevant to the content of this article.\\
~

\textbf{Acknowledgements:}  The author is supported by Fields Institute. Part of this work has been carried out during a visit of the author to the Department of Mathematics at Rutgers University (USA) in October $2024$, as part of the Abbas Bahri Excellence Fellowship for Graduate and Postdoctoral Research in Mathematics. He wishes to thank the Department for their warm hospitality.


\bibliographystyle{ieeetr}

\end{document}